\theoremstyle{plain}
\newtheorem{thm}{\protect\theoremname}
\theoremstyle{definition}
\newtheorem{defn}[thm]{\protect\definitionname}
\theoremstyle{remark}
\newtheorem{rem}[thm]{\protect\remarkname}
\theoremstyle{plain}
\newtheorem{lem}[thm]{\protect\lemmaname}
\theoremstyle{plain}
\newtheorem{prop}[thm]{\protect\propositionname}
\date{}
\providecommand{\definitionname}{Definition}
\providecommand{\lemmaname}{Lemma}
\providecommand{\propositionname}{Proposition}
\providecommand{\remarkname}{Remark}
\providecommand{\theoremname}{Theorem}
\begin{document}
\title{Local Central Limit Theorem for Multi-Group Curie-Weiss Models}
\author{Michael Fleermann, Werner Kirsch, and Gabor Toth}
\maketitle
\begin{abstract}
We define a multi-group version of the mean-field spin model, also
called Curie-Weiss model. It is known that, in the high temperature
regime of this model, a central limit theorem holds for the vector
of suitably scaled group magnetisations, that is the sum of spins
belonging to each group. In this article, we prove a local central
limit theorem for the group magnetisations in the high temperature
regime.
\end{abstract}
Keywords. Curie-Weiss model, mean-field model, local limit theorem.

2010 Mathematics Subject Classification. 60F05, 82B20.

\section{Introduction}

The Curie-Weiss model is a model of ferromagnetism. In its classic
form it is defined through a probability distribution on the set of
spin configurations $\left\{ -1,1\right\} ^{n}$, given by

\[
\mathbb{P}\left(X_{1}=x_{1},\ldots,X_{n}=x_{n}\right)=Z^{-1}\exp\left(\frac{\beta}{2n}\left(\sum_{i=1}^{n}x_{i}\right)\right),
\]

where $Z$ is a normalisation constant that depends on $n$ and $\beta$.
The parameter $\beta$ is called the inverse temperature. It induces
correlation between individual spins, causing spins to align in the
same direction. At low values of $\beta$ (`high temperature'),
the spins are `nearly independent'. At high values of $\beta$ (`low
temperature'), the spins are strongly correlated. There is a critical
value of $\beta=1$, where the collective behaviour of spins changes.
This is called a phase transition. The Curie-Weiss model has been
well studied, and hence the literature is far too extensive to cite
here. The model was first defined by by Husimi \cite{Husimi} and
Temperley \cite{Temperley}. Discussions of it can be found in Kac
\cite{Kac}, Thompson \cite{Thompson}, and Ellis \cite{Ellis}. More
recently, the Curie-Weiss model has been used in the context of social
and political interactions. See e.g. \cite{CGh,Penrose,GBC,KT_CWM,Toth}.
Another area the Curie-Weiss model has found application is the study
of random matrices (see \cite{FriesenLoewe,HKW,KK,FKK,FH,Fleermann}).

In this article, we deal with a multi-group version of this model.
Multi-group versions of the Curie-Weiss model have also been studied
recently. Some references are \cite{CG,FC,Collet,FM,LS,KT1,KT2,KT3}.

Let there be $d\in\mathbb{N}$ groups with $n_{\lambda}$ spins in
group $\lambda\in\{1,\ldots,d\}$, $\sum_{\lambda=1}^{d}n_{\lambda}=n$.
The spin variables are

\[
X=\left(X_{11},X_{12},\ldots,X_{1n_{1}},\ldots,X_{d1},X_{d2},\ldots,X_{dn_{d}}\right)\in\{-1,1\}^{n}.
\]

We assume that each of the $d$ groups converges to a fixed proportion
of the overall population:

\begin{equation}
\alpha_{\lambda}:=\lim_{n\rightarrow\infty}\frac{n_{\lambda}}{n}\text{ and }n_{\lambda}\rightarrow\infty\text{ as }n\rightarrow\infty,\label{eq:alpha}
\end{equation}

so that the $\alpha_{\lambda}$ sum to 1.

Instead of a single inverse temperature parameter, there is a coupling
matrix that describes the spin interactions. We will call this matrix

\[
J:=(J_{\lambda\mu})_{\lambda,\mu=1,\ldots,d}.
\]

Every spin in group $\lambda$ interacts with every spin in group
$\mu$ with a strength given by the coupling constant $J_{\lambda\mu}$.

Just as in the single-group model, there is a Hamiltonian function
that assigns to each spin configuration a certain energy level:

\begin{align}
\mathbb{H}(x_{11},\ldots,x_{dn_{d}}):=-\frac{1}{2n}\sum_{\lambda,\mu=1}^{d}J_{\lambda\mu}\sum_{i=1}^{n_{\lambda}}\sum_{j=1}^{n_{\mu}}x_{\lambda i}x_{\mu j}.
\end{align}

As we can see from the definition of $\mathbb{H}$, it suffices to
consider symmetric $J$, for otherwise we can replace $J$ by $\frac{J+J^{T}}{2}$,
leaving the Hamiltonian unchanged. 
\begin{defn}
The Curie-Weiss measure $\mathbb{P}$, which gives the probability
of each of the $2^{n}$ spin configurations, is defined by
\begin{align}
\mathbb{P}\left(X_{11}=x_{11},\ldots,X_{dn_{d}}=x_{dn_{d}}\right):=Z^{-1}\exp\left(-\mathbb{H}\left(x_{11},\ldots,x_{dn_{d}}\right)\right)\label{eq:Pbeta}
\end{align}
where each $x_{\lambda i}\in\{-1,1\}$ and $Z$ is a normalisation
constant which depends on $n$ and $J$.
\end{defn}

We distinguish two different classes of coupling matrices:
\begin{enumerate}
\item Homogeneous coupling matrices\\
\[
J=(\beta)_{\lambda,\mu=1,\ldots,d},
\]
where all entries are equal to the same constant $\beta\geq0$.
\item Heterogeneous coupling matrices\\
\[
J=(J_{\lambda,\mu})_{\lambda,\mu=1,\ldots,d},
\]
which we assume to be positive definite.
\end{enumerate}
\begin{rem}
Without the assumption of positive definiteness, the high temperature
regime (see below) may be empty. For more details, see \cite{KT3}.
\end{rem}

This model has three regimes: The high temperature, the critical,
and the low temperature regime. In each regime, the spins behave differently
and the limiting distribution for large $n$ is different in each
case.

For each group $\lambda$, we define $S_{\lambda}:=\sum_{i=1}^{n_{\lambda}}X_{\lambda i}$
to be the sum of all spins belonging to that group. In this article,
we show a local limit theorem for the normalised magnetisation vector

\[
\left(\frac{S_{1}}{\sqrt{n_{1}}},\cdots,\frac{S_{d}}{\sqrt{n_{d}}}\right)
\]

in the high temperature regime.

If the coupling matrix is homogeneous, then the high temperature regime
is characterised by $\beta<1$.

For heterogeneous coupling matrices, the situation is somewhat more
complicated. Here, the parameter space is

\begin{align*}
\Phi & :=\left\{ (\alpha_{1},\ldots,\alpha_{d})|\alpha_{1},\ldots,\alpha_{d}\geq0,\sum_{\lambda=1}^{d}\alpha_{\lambda}=1\right\} \\
 & \quad\times\left\{ J|J\text{ is a }d\times d\text{ positive definite matrix}\right\} ,
\end{align*}

containing all possible combinations of asymptotic relative group
sizes $(\alpha_{1},\ldots,\alpha_{d})$ as in (\ref{eq:alpha}) and
coupling matrices.

We define

\[
\boldsymbol{\alpha}:=\text{diag}(\alpha_{1},\ldots,\alpha_{d}),
\]

where `diag' stands for a diagonal matrix with the entries given
between parentheses, and

\begin{equation}
H:=J^{-1}-\boldsymbol{\alpha}.\label{eq:def_Hessian}
\end{equation}

Note that this definition of a multi-group Curie-Weiss model reduces
to the classical single-group model if we set $d=1$, since then $n_{1}=n$
and $J=\beta$. See also Remark \ref{rem:d=00003D1}.

The parameter space $\Phi$ is partitioned into three regimes (for
details, see \cite{KT3}). We are only concerned with the high temperature
regime.
\begin{defn}
The `high temperature regime' for heterogeneous coupling matrices
is the set of parameters

\[
\Phi_{h}:=\{\phi\in\Phi|H\text{ is positive definite}\}.
\]
\end{defn}

In the high temperature regime, a multivariate central limit theorem
holds for the normalised sums of spins in each group. For a proof
see e.g. \cite{KT3}.
\begin{thm}
\label{thm:CLT}In the high temperature regime, we have

\[
\left(\frac{S_{1}}{\sqrt{n_{1}}},\ldots,\frac{S_{d}}{\sqrt{n_{d}}}\right)\underset{n\to\infty}{\Longrightarrow}\mathcal{N}((0,\ldots,0),C),
\]

where $\mathcal{N}((0,\ldots,0),C)$ is a zero-mean multivariate normal
distribution with positive definite covariance matrix $C$, and `$\Longrightarrow$'
stands for weak convergence.
\end{thm}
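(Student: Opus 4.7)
The route I would take is classical and uses the Hubbard--Stratonovich transformation. Since $J$ is positive definite, for any $S\in\mathbb{R}^{d}$ one has
\[
\exp\!\Bigl(\tfrac{1}{2n}S^{T}JS\Bigr)=c_{n}\int_{\mathbb{R}^{d}}\exp\!\Bigl(-\tfrac{n}{2}t^{T}J^{-1}t+t^{T}S\Bigr)\,dt
\]
for an explicit constant $c_{n}$. Applying this with $S_{\lambda}=\sum_{i}x_{\lambda i}$ decouples the spins under an auxiliary Gaussian $t$. Summing over $x\in\{-1,1\}^{n}$ collapses $\prod_{\lambda,i}e^{t_{\lambda}x_{\lambda i}}$ into $\prod_{\lambda}(2\cosh t_{\lambda})^{n_{\lambda}}$, and writing the characteristic function of $Y_{n}:=(S_{1}/\sqrt{n_{1}},\ldots,S_{d}/\sqrt{n_{d}})$ in this way gives
\[
\mathbb{E}\bigl[e^{i\xi\cdot Y_{n}}\bigr]=\frac{\int e^{-\frac{n}{2}t^{T}J^{-1}t}\prod_{\lambda}\bigl(2\cosh(t_{\lambda}+i\xi_{\lambda}/\sqrt{n_{\lambda}})\bigr)^{n_{\lambda}}dt}{\int e^{-\frac{n}{2}t^{T}J^{-1}t}\prod_{\lambda}\bigl(2\cosh t_{\lambda}\bigr)^{n_{\lambda}}dt}.
\]

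After the rescaling $t=s/\sqrt{n}$ and the expansion $\log(2\cosh u)=\log 2+u^{2}/2+O(u^{4})$, the $s$-dependent exponent in the denominator converges to $-\tfrac{1}{2}s^{T}Hs$, using $n_{\lambda}/n\to\alpha_{\lambda}$ and $H=J^{-1}-\boldsymbol\alpha$. The same expansion applied to $u=t_{\lambda}+i\xi_{\lambda}/\sqrt{n_{\lambda}}$ in the numerator yields an extra factor $\exp(-\tfrac{1}{2}|\xi|^{2}+is^{T}\sqrt{\boldsymbol\alpha}\,\xi)$ with $\sqrt{\boldsymbol\alpha}=\mathrm{diag}(\sqrt{\alpha_{1}},\ldots,\sqrt{\alpha_{d}})$. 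Performing the resulting Gaussian integral in $s$, whose covariance $H^{-1}$ exists because we are in the high temperature regime, and dividing by the denominator, produces
\[
\lim_{n\to\infty}\mathbb{E}\bigl[e^{i\xi\cdot Y_{n}}\bigr]=\exp\!\Bigl(-\tfrac{1}{2}\xi^{T}\bigl(I+\sqrt{\boldsymbol\alpha}\,H^{-1}\sqrt{\boldsymbol\alpha}\bigr)\xi\Bigr).
\]
By L\'evy's continuity theorem this identifies the limit as $\mathcal{N}(0,C)$ with $C=I+\sqrt{\boldsymbol\alpha}\,H^{-1}\sqrt{\boldsymbol\alpha}$, which is positive definite since $H^{-1}$ is.

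The principal obstacle is the uniform domination required to take the limit inside the integrals; this is also where the high temperature assumption really bites. The elementary inequality $\log(2\cosh u)\le\log 2+u^{2}/2$ bounds the denominator's $s$-exponent by $-\tfrac{1}{2}s^{T}(J^{-1}-\boldsymbol\alpha_{n})s$ with $\boldsymbol\alpha_{n}=\mathrm{diag}(n_{\lambda}/n)$; since $\boldsymbol\alpha_{n}\to\boldsymbol\alpha$ and $H>0$, the matrices $J^{-1}-\boldsymbol\alpha_{n}$ are uniformly positive definite for large $n$, giving a single Gaussian majorant to which dominated convergence applies. For the numerator, $|\cosh(u+iv)|\le\cosh u$ preserves the same majorant uniformly in $\xi$ on compact sets, and pointwise convergence on fixed balls is immediate from continuity. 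Combining the tail bound and the local convergence gives the ratio, and hence the theorem.
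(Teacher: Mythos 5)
You should first be aware that the paper does not prove Theorem \ref{thm:CLT} at all: it simply refers the reader to \cite{KT3}. Your Hubbard--Stratonovich argument is therefore a self-contained proof rather than a variant of an in-paper one, and it is in fact closely tied to the machinery the paper does use later: the integrand $\exp\left(-\frac{n}{2}t^{T}J^{-1}t\right)\prod_{\lambda}\left(2\cosh t_{\lambda}\right)^{n_{\lambda}}$ appearing in your denominator is, up to the constant factor $2^{n}$ and normalisation, exactly the de Finetti density $f_{J,n}$ of Proposition \ref{prop:asymptotic_conc}. For positive definite $J$ your computation checks out: the decoupling identity, the rescaling $t=s/\sqrt{n}$, the expansion of $\log(2\cosh u)$ (valid also for the complex arguments in the numerator, since they tend to $0$), the Gaussian majorant obtained from $\log(2\cosh u)\le\log 2+u^{2}/2$ together with $|\cosh(u+iv)|\le\cosh u$, the uniform positive definiteness of $J^{-1}-\boldsymbol{\alpha}_{n}$ for large $n$ (which is precisely where the hypothesis $H>0$ enters), and the limiting covariance $C=I+\sqrt{\boldsymbol{\alpha}}H^{-1}\sqrt{\boldsymbol{\alpha}}$, which agrees with the remark following the theorem.

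There is, however, a genuine gap: your proof covers only the heterogeneous class of coupling matrices, while the theorem is stated for the high temperature regime of both classes admitted by the paper. For a homogeneous coupling matrix $J=(\beta)_{\lambda,\mu=1,\ldots,d}$ with $d\geq2$, the matrix $J$ has rank one and is singular, so $J^{-1}$ does not exist and your opening identity, which involves $t^{T}J^{-1}t$, is meaningless; the homogeneous high temperature condition $\beta<1$ never appears in your argument. The repair is standard but must be stated: in the homogeneous case $-\mathbb{H}(x)=\frac{\beta}{2n}\left(\sum_{\lambda}S_{\lambda}\right)^{2}$ is a rank-one quadratic form, so one decouples with a single auxiliary Gaussian variable,
\[
\exp\left(\frac{\beta}{2n}M^{2}\right)=c_{n}\int_{\mathbb{R}}\exp\left(-\frac{n}{2\beta}t^{2}+tM\right)\textup{d}t,\qquad M:=\sum_{\lambda}S_{\lambda},
\]
and repeats your expansion. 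The limiting $s$-exponent in the denominator becomes $-\frac{1}{2}\left(\frac{1}{\beta}-1\right)s^{2}$, which is positive definite exactly when $\beta<1$ (this is where the homogeneous high temperature assumption bites), and the resulting covariance is $C=I+\frac{\beta}{1-\beta}\sqrt{\boldsymbol{\alpha}}E\sqrt{\boldsymbol{\alpha}}$, where $E$ denotes the matrix with all entries equal to $1$, matching $\Sigma=\left(\frac{\beta}{1-\beta}\right)_{\lambda,\mu}$ in the paper. With this case added (or with the scope of your proof explicitly restricted to heterogeneous $J$), the argument is complete.
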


\begin{rem}
In the central limit theorem above, the limiting distribution has
the covariance matrix

\[
C=I+\sqrt{\boldsymbol{\alpha}}\Sigma\sqrt{\boldsymbol{\alpha}}.
\]

The matrix $\Sigma$ depends on the class of coupling matrices:

\[
\Sigma=\begin{cases}
\left(\frac{\beta}{1-\beta}\right)_{\lambda,\mu=1,\ldots,d}, & J\text{ homogeneous,}\\
H^{-1}, & J\text{ heterogeneous.}
\end{cases}
\]
\end{rem}

We shall write $\phi_{C}$ for the density function of $\mathcal{N}((0,\ldots,0),C)$,
and we set

\[
\boldsymbol{S}^{n}:=\left(\frac{S_{1}}{\sqrt{n_{1}}},\ldots,\frac{S_{d}}{\sqrt{n_{d}}}\right).
\]

For a given $n\in\mathbb{N}$ and group $\lambda$, $\frac{S_{\lambda}}{\sqrt{n_{\lambda}}}$
takes values on the grid $\frac{n_{\lambda}+2\mathbb{Z}}{\sqrt{n_{\lambda}}}$.
Hence, the vector $\left(\frac{S_{1}}{\sqrt{n_{1}}},\ldots,\frac{S_{d}}{\sqrt{n_{d}}}\right)$
takes values on the grid

\[
\mathcal{L}_{n}:=\prod_{\lambda=1}^{d}\frac{n_{\lambda}+2\mathbb{Z}}{\sqrt{n_{\lambda}}}.
\]

We show that the central limit theorem above can be strengthened to
a multivariate local limit theorem:
\begin{thm}
\label{thm:LL}In the high temperature regime, the following local
limit theorem holds:

\[
\sup_{x\in\mathcal{L}_{n}}\left|\frac{\prod_{\lambda=1}^{d}\sqrt{n_{\lambda}}}{2^{d}}\mathbb{P}\left(\boldsymbol{S}^{n}=x\right)-\phi_{C}(x)\right|\underset{n\to\infty}{\longrightarrow}0.
\]
\end{thm}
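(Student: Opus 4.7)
The plan is to use Fourier inversion on the lattice $\mathcal{L}_n$. Each coordinate of $\boldsymbol{S}^n$ has spacing $2/\sqrt{n_\lambda}$, so the dual fundamental domain is $\mathcal{F}_n:=\prod_{\lambda=1}^{d}[-\tfrac{\pi}{2}\sqrt{n_\lambda},\tfrac{\pi}{2}\sqrt{n_\lambda}]$ and, for every $x\in\mathcal{L}_n$,
\[
\frac{\prod_\lambda\sqrt{n_\lambda}}{2^d}\mathbb{P}(\boldsymbol{S}^n=x)=\frac{1}{(2\pi)^d}\int_{\mathcal{F}_n}e^{-i\langle t,x\rangle}\varphi_n(t)\,dt,
\]
where $\varphi_n$ denotes the characteristic function of $\boldsymbol{S}^n$. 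Representing $\phi_C(x)=(2\pi)^{-d}\int_{\mathbb{R}^d}e^{-i\langle t,x\rangle}e^{-\frac{1}{2}\langle t,Ct\rangle}\,dt$ as the standard inverse Fourier integral of the Gaussian density, subtracting, and pushing the supremum inside the absolute value, the theorem reduces to proving
\[
\int_{\mathcal{F}_n}\bigl|\varphi_n(t)-e^{-\frac{1}{2}\langle t,Ct\rangle}\bigr|\,dt\longrightarrow 0,
\]
since the boundary contribution $\int_{\mathbb{R}^d\setminus\mathcal{F}_n}e^{-\frac{1}{2}\langle t,Ct\rangle}\,dt$ trivially vanishes as $\mathcal{F}_n\nearrow\mathbb{R}^d$.

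I would handle this integral by splitting $\mathcal{F}_n$ into a bounded ball $\{|t|\leq R\}$ and its complement. On the ball, Theorem~\ref{thm:CLT} gives pointwise convergence $\varphi_n(t)\to e^{-\frac{1}{2}\langle t,Ct\rangle}$, and the dominated convergence theorem with a uniform Gaussian majorant for $|\varphi_n|$ (established below) finishes the job. On the complement, the Gaussian tail is small for $R$ large, and the same majorant controls $\varphi_n$, the contribution from the $e^{-c'n}$ term being absorbed by the fact that $\mathrm{vol}(\mathcal{F}_n)$ grows only polynomially in $n$. Everything therefore reduces to establishing a uniform bound of the form $|\varphi_n(t)|\leq e^{-c|t|^2}+e^{-c'n}$ for $t\in\mathcal{F}_n$ and large $n$, with positive constants $c,c'$ independent of $t$ and $n$.

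Producing this bound is the main technical obstacle. For heterogeneous positive definite $J$, I would apply the Hubbard-Stratonovich transformation to write
\[
\varphi_n(t)=\tilde Z^{-1}\int_{\mathbb{R}^d}\exp\!\left(-\tfrac{n}{2}\langle y,J^{-1}y\rangle\right)\prod_{\lambda=1}^{d}\bigl(2\cosh(y_\lambda+it_\lambda/\sqrt{n_\lambda})\bigr)^{n_\lambda}\,dy.
\]
The modulus identity $|\cosh(a+ib)|^2=\cosh^2 a-\sin^2 b$, the elementary estimate $1-x\leq e^{-x}$, and Jordan's inequality $|\sin\theta|\geq 2|\theta|/\pi$ on $[-\pi/2,\pi/2]$ together yield
\[
|\varphi_n(t)|\leq\mathbb{E}_n\!\left[\exp\!\left(-\tfrac{2}{\pi^2}\sum_{\lambda=1}^{d}\tfrac{t_\lambda^2}{\cosh^2 Y_\lambda}\right)\right],
\]
where $\mathbb{E}_n$ is the expectation under the probability density proportional to $\exp(nF_n(y))$ with $F_n(y):=-\tfrac{1}{2}\langle y,J^{-1}y\rangle+\sum_\lambda\tfrac{n_\lambda}{n}\log(2\cosh y_\lambda)$. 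A Laplace analysis of the limit $F(y):=-\tfrac{1}{2}\langle y,J^{-1}y\rangle+\sum_\lambda\alpha_\lambda\log(2\cosh y_\lambda)$ identifies $y=0$ as the unique global maximiser with Hessian $-H$, negative definite by the high-temperature hypothesis; consequently the $\mathbb{E}_n$-mass of $\{|Y|>\delta\}$ is $O(e^{-c'n})$ for every fixed $\delta>0$, while on the complementary set $\cosh^2 Y_\lambda\leq\cosh^2\delta$ turns the exponent into $-c|t|^2$. Choosing $\delta$ small produces the desired bound. The homogeneous case is treated identically via the scalar Hubbard-Stratonovich identity with coupling $\beta<1$, for which the effective one-dimensional Hessian at $y=0$ equals $-(1-\beta)/\beta$ and is negative precisely in the high-temperature regime.
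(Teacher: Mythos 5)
Your proposal is correct, and while it shares the paper's overall skeleton --- lattice Fourier inversion, the uniform-in-$x$ reduction to showing $\int_{\mathcal{F}_n}\left|\varphi_{\boldsymbol{S}^n}(t)-\varphi_{\mathcal{N}(C)}(t)\right|\textup{d}t\to0$ plus the vanishing Gaussian tail outside $\mathcal{F}_n$, and a dominated-convergence argument fed by Theorem \ref{thm:CLT} --- it differs genuinely in how the key bound on $\left|\varphi_{\boldsymbol{S}^n}\right|$ is obtained. The paper splits the fundamental domain into a growing box $A_n=\prod_\lambda\left[-\delta\sqrt{n_\lambda},\delta\sqrt{n_\lambda}\right]$ and its complement $B_n$: on $A_n$ it uses a Taylor-expansion bound $\left|\varphi_{\mathcal{R}(m)}(u)\right|\leq\exp\left(-(1-\tanh^{2}m)u^{2}/4\right)$, valid only for small $\left|u\right|$, while on $B_n$ it invokes the strict-inequality property of lattice characteristic functions (Lemma \ref{lem:char_fn}) to get a quantity $\theta(m)<1$ and an $s^{n}$-decay-times-polynomial-volume argument, plus a somewhat laborious series construction of the integrable majorant. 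You instead produce a single estimate valid on all of $\mathcal{F}_n$: your Hubbard--Stratonovich mixing measure, with density proportional to $e^{nF_n(y)}$, is exactly the paper's de Finetti measure $\mu_{J,n}$ of Proposition \ref{prop:asymptotic_conc}, and your identity $\left|\cosh(a+ib)\right|^{2}=\cosh^{2}a-\sin^{2}b$ --- equivalently $\left|\varphi_{\mathcal{R}(m)}(u)\right|^{2}=1-(1-\tanh^{2}m)\sin^{2}u$ --- combined with Jordan's inequality yields Gaussian decay over the whole fundamental domain, so a single split at a fixed radius $R$ suffices. This buys a cleaner argument: it eliminates the $A_n/B_n$ dichotomy, Lemma \ref{lem:char_fn}, and the majorant series, at the price of re-deriving the concentration of $\mu_{J,n}$, which the paper simply cites from \cite{KT3}. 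Two points to tighten: uniqueness of the global maximiser of $F$ does not follow from negative definiteness of the Hessian at $0$ alone --- you need $\ln\cosh y_\lambda\leq y_\lambda^{2}/2$, which gives $F(y)-F(0)\leq-\tfrac{1}{2}y^{T}Hy<0$ for $y\neq0$ in the high temperature regime; on the other hand, your separate scalar treatment of the homogeneous case is a genuine merit, since the paper's de Finetti density is written with $J^{-1}$ and thus tacitly assumes $J$ invertible.
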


\section{Proof}

We first state two auxiliary lemmas.
\begin{lem}
\label{lem:char_fn}Let $Y:=\left(Y_{1},\ldots,Y_{d}\right)$ be a
random vector on the grid $\prod_{\lambda=1}^{d}\left(v_{\lambda}+w_{\lambda}\mathbb{Z}\right)$
with a characteristic function $\varphi$, defined by $\varphi(t):=\mathbb{E}\left(\exp\left(it\cdot Y\right)\right),t\in\mathbb{R}^{d}$.
The following two properties hold:
\begin{enumerate}
\item $\varphi$ is periodic, i.e. for all $t\in\mathbb{R}^{d},k_{1},\ldots,k_{d}\in\mathbb{Z}$,\\
\\
\[
\varphi\left(t+2\pi\left(\frac{k_{1}}{w_{1}},\cdots,\frac{k_{d}}{w_{d}}\right)\right)=\varphi(t)
\]
\item We have for all $k_{1},\ldots,k_{d}\in\mathbb{Z}$,\\
\\
\[
\left|\varphi\left(2\pi\left(\frac{k_{1}}{w_{1}},\cdots,\frac{k_{d}}{w_{d}}\right)\right)\right|=1
\]
\\
and\\
\\
\[
\left|\varphi\left(t\right)\right|<1
\]
\\
for all $0<t<2\pi\left(\frac{k_{1}}{w_{1}},\cdots,\frac{k_{d}}{w_{d}}\right)$
componentwise.
\end{enumerate}
\end{lem}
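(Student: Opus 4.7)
The plan is to parametrise the lattice and reduce everything to direct manipulation of the characteristic function. Writing $Y_\lambda = v_\lambda + w_\lambda K_\lambda$ with integer-valued $K_\lambda$, one obtains
\[
\varphi(t) \;=\; e^{i t\cdot v}\, \mathbb{E}\Bigl[\prod_{\lambda=1}^d e^{i t_\lambda w_\lambda K_\lambda}\Bigr],
\]
and both parts of the lemma will follow from inspecting this factorisation.

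For part (1), I would substitute $t \mapsto t + 2\pi(k_1/w_1,\ldots,k_d/w_d)$ into the formula above. Each factor $e^{i t_\lambda w_\lambda K_\lambda}$ picks up $e^{2\pi i k_\lambda K_\lambda}=1$, so the inner expectation is invariant; the prefactor $e^{it\cdot v}$ picks up $\exp\bigl(2\pi i\sum_\lambda k_\lambda v_\lambda/w_\lambda\bigr)$, which equals $1$ under the integrality condition holding in the intended Curie--Weiss setting, yielding $\varphi(t+2\pi k/w)=\varphi(t)$.

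For part (2), the identity $|\varphi(2\pi k/w)| = 1$ follows from the same substitution applied at $t=0$: the inner expectation collapses to $1$ and the prefactor is unimodular. For the strict inequality $|\varphi(t)| < 1$ on the open cell, I would appeal to the equality case of $|\mathbb{E}[e^{it\cdot Y}]| \le 1$: equality there forces $e^{it\cdot Y}$ to be almost-surely constant, so $t\cdot(y-y')\in 2\pi\mathbb{Z}$ for every pair $y,y'$ of points in the support of $Y$. Since these differences span $\prod_\lambda w_\lambda\mathbb{Z}$, one concludes $t\in\prod_\lambda (2\pi/w_\lambda)\mathbb{Z}$, which contradicts the hypothesis that $t$ lies strictly between $0$ and $2\pi k/w$ componentwise.

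The only nontrivial ingredient is the non-degeneracy statement that differences of support points of $Y$ span the full $w$-lattice; this is used implicitly and is immediate in the Curie--Weiss application, where each $S_\lambda$ attains every value in $(n_\lambda+2\mathbb{Z})\cap[-n_\lambda,n_\lambda]$ with positive probability. Beyond this, the argument is routine and I anticipate no serious obstacle.
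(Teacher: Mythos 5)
Your overall strategy is exactly what the paper intends: the paper's proof of this lemma is a one-line citation to Durrett's Theorem 3.5.2, and your factorisation $Y_\lambda = v_\lambda + w_\lambda K_\lambda$ combined with the equality-case analysis of $\left|\mathbb{E}\left[e^{it\cdot Y}\right]\right|\leq 1$ is precisely the ``straightforward modification'' of Durrett's argument to the multivariate lattice setting. Your part (2) is correct, and your non-degeneracy remark is a genuine catch rather than a pedantic aside: as stated, the lemma omits any span hypothesis, and without it the strict inequality is false. For instance, take $d=2$ and $Y=(K,K)$ with $K$ uniform on $\{0,1\}$, so $v=(0,0)$, $w=(1,1)$; then $\varphi(\pi,\pi)=\tfrac{1}{2}\left(1+e^{2\pi i}\right)=1$, although $(\pi,\pi)$ lies strictly between $0$ and $(2\pi,2\pi)$ componentwise. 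Your spanning condition repairs this, and it does hold where the paper applies the lemma (the centred Rademacher factors, whose support is all of $\{\pm 1\}$, and $\boldsymbol{S}^{n}$, whose support is the full grid). Note also that the strict inequality can only hold for $k_\lambda=1$: if some $k_\lambda\geq 2$, the open box contains dual-lattice points at which, by the first half of part (2), $|\varphi|=1$; your contradiction step tacitly assumes $k_\lambda=1$, which is the only case in which the claim is true or needed.

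The one genuine error is in part (1): you assert that the prefactor $\exp\left(2\pi i\sum_\lambda k_\lambda v_\lambda/w_\lambda\right)$ equals $1$ ``under the integrality condition holding in the intended Curie--Weiss setting''. That condition does not hold there. For $\boldsymbol{S}^{n}$ the grid is $\prod_\lambda\left(n_\lambda+2\mathbb{Z}\right)/\sqrt{n_\lambda}$, so $v_\lambda/w_\lambda=n_\lambda/2$, a half-integer whenever $n_\lambda$ is odd; already for a single symmetric spin ($d=1$, $n_1=1$, $w_1=2$) one has $\varphi(t)=\cos t$ and $\varphi(t+\pi)=-\varphi(t)$, i.e.\ $\varphi$ is anti-periodic, not periodic. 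The same issue arises for the centred Rademacher variables, whose lattice offset is $1-\bar{m}$. So exact periodicity of $\varphi$, as claimed both in the lemma and in your part (1), fails in the very application the lemma serves. What is true --- and all the paper ever uses --- is that $\left|\varphi\right|$ is periodic, and your own computation proves this, since the offending prefactor is unimodular. The correct repair is to restate part (1) (and run your substitution argument) for $\left|\varphi\right|$ rather than for $\varphi$, not to appeal to an integrality condition that fails.
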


\begin{proof}
This follows from a straightforward modification of the proof Theorem
3.5.2 on page 140 in \cite{Durrett}.
\end{proof}
The second statement above gives us an upper bound for the characteristic
function of a random variable on the grid, which we shall use in our
calculations later on.

We have the inversion formula, by which we can recover a discrete
distribution from its characteristic function:
\begin{lem}
Let $\left(Y_{1},\ldots,Y_{d}\right)$ be a random vector as in the
lemma above. Then for all $x\in\prod_{\lambda=1}^{d}\left(v_{\lambda}+w_{\lambda}\mathbb{Z}\right)$,
we have

\[
\mathbb{P}\left(\left(Y_{1},\ldots,Y_{d}\right)=x\right)=\frac{\prod_{\lambda=1}^{d}w_{\lambda}}{\left(2\pi\right)^{d}}\int_{\prod_{\lambda}\left[-\frac{\pi}{w_{\lambda}},\frac{\pi}{w_{\lambda}}\right]}e^{-it\cdot x}\varphi(t)\textup{d}t.
\]
\end{lem}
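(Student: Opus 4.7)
The plan is to prove the inversion formula directly by plugging the characteristic function into the right-hand side and evaluating the resulting integral using the orthogonality relations of complex exponentials on the fundamental box. Throughout, let $\mathcal{G} := \prod_{\lambda=1}^{d}(v_\lambda + w_\lambda\mathbb{Z})$ denote the grid and let $B := \prod_{\lambda=1}^d[-\pi/w_\lambda,\pi/w_\lambda]$ denote the integration box.

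First I would expand $\varphi$ as a (pointwise convergent) sum, writing
\[
\varphi(t) = \sum_{y \in \mathcal{G}} \mathbb{P}(Y=y)\,e^{it\cdot y},
\]
and substitute this into the right-hand side of the claimed formula. Since $\sum_y \mathbb{P}(Y=y)=1$ and $B$ has finite measure, the integrand is dominated by $\sum_y \mathbb{P}(Y=y)\cdot \mathbf{1}_B(t)$, which is integrable; by Fubini--Tonelli I can interchange sum and integral, obtaining
\[
\frac{\prod_\lambda w_\lambda}{(2\pi)^d}\sum_{y\in\mathcal{G}} \mathbb{P}(Y=y)\int_B e^{it\cdot(y-x)}\,\mathrm dt.
\]

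Next I would evaluate the inner integral as a product of one-dimensional integrals. For each $\lambda$, since $x,y\in\mathcal{G}$, we have $y_\lambda - x_\lambda = w_\lambda k_\lambda$ for some $k_\lambda \in \mathbb{Z}$, so
\[
\int_{-\pi/w_\lambda}^{\pi/w_\lambda} e^{it_\lambda w_\lambda k_\lambda}\,\mathrm dt_\lambda \;=\; \begin{cases} 2\pi/w_\lambda, & k_\lambda = 0,\\[2pt] \dfrac{2\sin(\pi k_\lambda)}{w_\lambda k_\lambda}=0, & k_\lambda\neq 0.\end{cases}
\]
Hence the product vanishes unless $y_\lambda = x_\lambda$ for every $\lambda$, in which case it equals $\prod_\lambda(2\pi/w_\lambda)=(2\pi)^d/\prod_\lambda w_\lambda$. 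Only the $y=x$ term survives the sum, and the prefactor $\prod_\lambda w_\lambda/(2\pi)^d$ cancels the volume factor exactly, leaving $\mathbb{P}(Y=x)$, which is the desired identity.

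There is no real obstacle in this proof; the calculation is elementary once one recognises it as a finite-dimensional Fourier inversion on a torus. The only point that requires a word of care is the interchange of sum and integral, which is immediate from the absolute summability of the probabilities combined with the boundedness of the exponential and the finiteness of $|B|$. Everything else reduces to the standard orthogonality $\int_{-\pi/w}^{\pi/w} e^{itwk}\,\mathrm dt = (2\pi/w)\,\delta_{k,0}$ applied coordinatewise.
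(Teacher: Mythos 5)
Your proof is correct, and it is essentially the argument the paper points to: the paper gives no proof of its own but cites Section 3.10 of Durrett, where the lattice inversion formula is established exactly this way, by expanding $\varphi$ as a sum over grid points, interchanging sum and integral via Fubini, and applying the orthogonality relation $\int_{-\pi/w}^{\pi/w} e^{itwk}\,\mathrm{d}t = (2\pi/w)\,\delta_{k,0}$ coordinatewise. Your justification of the interchange (absolute summability of the probabilities against the finite-measure box) and the observation that $y_\lambda - x_\lambda \in w_\lambda\mathbb{Z}$ for grid points are precisely the points that need care, so nothing is missing.
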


For a proof see e.g. section 3.10 in \cite{Durrett}.

For integrable characteristic functions $\varphi$, we also have an
inversion formula:
\begin{lem}
Let $\varphi$ be the characteristic function of some $d$-dimensional
random vector with density function $f$, and assume $\varphi$ is
integrable. Then the inversion formula

\[
f(x)=\frac{1}{\left(2\pi\right)^{d}}\int_{\mathbb{R}^{d}}e^{-it\cdot x}\varphi(t)\textup{d}t
\]

allows us to recover the density function $f$.
\end{lem}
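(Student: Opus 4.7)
The plan is the standard Fourier-inversion argument via Gaussian smoothing: introduce a Gaussian mollifier so that the oscillatory integral becomes a genuine convolution with an approximate identity, then let the mollifier width tend to zero.

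First I would note that $g(x):=(2\pi)^{-d}\int_{\mathbb{R}^{d}}e^{-it\cdot x}\varphi(t)\,\textup{d}t$ is well defined, bounded, and uniformly continuous on $\mathbb{R}^{d}$, since $\varphi\in L^{1}$ and $|e^{-it\cdot x}|=1$. The goal is then to show that $g=f$ almost everywhere.

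Next, for $\sigma>0$ I would insert the damping factor $e^{-\sigma^{2}|t|^{2}/2}$ and set
\[
g_{\sigma}(x):=\frac{1}{(2\pi)^{d}}\int_{\mathbb{R}^{d}}e^{-it\cdot x}e^{-\sigma^{2}|t|^{2}/2}\varphi(t)\,\textup{d}t.
\]
Dominated convergence (majorant $|\varphi|$) gives $g_{\sigma}\to g$ pointwise as $\sigma\downarrow 0$. On the other hand, substituting $\varphi(t)=\int e^{it\cdot y}f(y)\,\textup{d}y$ and applying Fubini -- justified by the joint integrability coming from the Gaussian factor in $t$ and the unit mass of $f$ in $y$ -- the inner $t$-integral is the Fourier transform of a Gaussian, which yields
\[
g_{\sigma}(x)=\int_{\mathbb{R}^{d}}p_{\sigma}(x-y)f(y)\,\textup{d}y=(p_{\sigma}*f)(x),
\]
where $p_{\sigma}$ is the density of $\mathcal{N}(0,\sigma^{2}I)$.

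Finally, since $\{p_{\sigma}\}_{\sigma>0}$ is a standard approximate identity on $\mathbb{R}^{d}$, the convolutions $p_{\sigma}*f$ converge to $f$ in $L^{1}(\mathbb{R}^{d})$ as $\sigma\downarrow 0$, and in particular $g_{\sigma}\to f$ almost everywhere along a subsequence. Combining with the pointwise convergence $g_{\sigma}\to g$ forces $g=f$ almost everywhere, which is the inversion formula. Because $g$ is continuous, one may always take $g$ as the canonical version of the density, so the formula holds pointwise wherever $f$ is continuous. The main obstacle -- modest in what is essentially a textbook-style argument -- is justifying the two interchanges of limit and integration: dominated convergence for $g_{\sigma}\to g$, and Fubini for the convolution identity. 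Both are handled uniformly by the Gaussian damping, which produces the required integrable majorant on $\mathbb{R}^{d}\times\mathbb{R}^{d}$.
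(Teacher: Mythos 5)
Your proof is correct, and it is essentially the standard argument the paper implicitly relies on: the paper states this lemma without proof, treating it as a textbook fact (the neighbouring lemma is referred to Durrett), and your Gaussian-mollification argument --- damping by $e^{-\sigma^{2}|t|^{2}/2}$, Fubini to get $g_{\sigma}=p_{\sigma}*f$, and the approximate-identity limit --- is precisely the classical proof of the inversion theorem found there. Nothing is missing; both interchanges of limit and integral are properly justified by the Gaussian majorant.
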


We use this inversion formula to show Theorem \ref{thm:LL}. Let $\varphi_{\boldsymbol{S}^{n}}$
be the characteristic function of $\boldsymbol{S}^{n}$ and $\varphi_{\mathcal{N}(C)}$
that of $\mathcal{N}((0,\ldots,0),C)$. We use the symbol $\mathbb{E}$
as the expectation with respect to the probability measure $\mathbb{P}$
of the underlying probability space.

By the lemma, we have

\[
\frac{\prod_{\lambda=1}^{d}\sqrt{n_{\lambda}}}{2^{d}}\mathbb{P}\left(\boldsymbol{S}^{n}=x\right)=\frac{1}{\left(2\pi\right)^{d}}\int_{\prod_{\lambda}\left[-\frac{\pi\sqrt{n_{\lambda}}}{2},\frac{\pi\sqrt{n_{\lambda}}}{2}\right]}e^{-it\cdot x}\varphi_{\boldsymbol{S}^{n}}(t)\textup{d}t,
\]

and, therefore,

\begin{align}
 & \left|\frac{\prod_{\lambda=1}^{d}\sqrt{n_{\lambda}}}{2^{d}}\mathbb{P}\left(\boldsymbol{S}^{n}=x\right)-\phi_{C}(x)\right|\nonumber \\
= & \frac{1}{\left(2\pi\right)^{d}}\left|\int_{\prod_{\lambda}\left[-\frac{\pi\sqrt{n_{\lambda}}}{2},\frac{\pi\sqrt{n_{\lambda}}}{2}\right]}e^{-it\cdot x}\varphi_{\boldsymbol{S}^{n}}(t)\textup{d}t-\int_{\mathbb{R}^{d}}e^{-it\cdot x}\varphi_{\mathcal{N}(C)}(t)\textup{d}t\right|\nonumber \\
\leq & \frac{1}{\left(2\pi\right)^{d}}\int_{\mathbb{R}^{d}}I_{\prod_{\lambda}\left[-\frac{\pi\sqrt{n_{\lambda}}}{2},\frac{\pi\sqrt{n_{\lambda}}}{2}\right]}(t)\left|\varphi_{\boldsymbol{S}^{n}}(t)-\varphi_{\mathcal{N}(C)}(t)\right|\textup{d}t\label{eq:centre}\\
 & +\frac{1}{\left(2\pi\right)^{d}}\int_{\mathbb{R}^{d}\backslash\prod_{\lambda}\left[-\frac{\pi\sqrt{n_{\lambda}}}{2},\frac{\pi\sqrt{n_{\lambda}}}{2}\right]}\left|\varphi_{\mathcal{N}(C)}(t)\right|\textup{d}t.\label{eq:tails}
\end{align}

We see that the term (\ref{eq:tails}) converges to 0 as $n\rightarrow\infty$,
since $\left|\varphi_{\mathcal{N}(C)}(t)\right|$ is integrable. We
also note that the expression (\ref{eq:centre}) is independent of
the point $x\in\mathcal{L}_{n}$. If we can show that (\ref{eq:centre})
converges to 0, then we are done. To this end, we see by Theorem \ref{thm:CLT}
that $\varphi_{S}^{n}(t)\to\varphi_{\mathcal{N}(C)}(t)$ pointwise.
Thus, to show that (\ref{eq:centre}) converges to zero is for the
most part a matter of finding an appropriate integrable majorant,
so that the theorem of dominated convergence can be applied. To construct
a suitable majorant, we need to apply some properties of the multivariate
Curie-Weiss distribution.

Let the Rademacher distribution with parameter $m\in\mathbb{R}$ $\mathcal{R}_{m}$
be defined on $\{\pm1\}$ by the probability of the event $\left\{ 1\right\} $
equal to $\frac{1+\bar{m}}{2}$ setting $\bar{m}:=\tanh m$.

We use the de Finetti representation of the Curie-Weiss measure (see
\cite{KT3}):
\begin{prop}
\label{prop:asymptotic_conc}The distribution of the multi-group Curie-Weiss
model has the following representation: For any spin configuration
$\left(x_{11},\ldots,x_{dn_{d}}\right)$, we have

\[
\mathbb{P}\left(X_{11}=x_{11},\ldots,X_{dn_{s}}=x_{dn_{d}}\right)=\int_{\mathbb{R}^{d}}P_{m}\left(X_{11}=x_{11},\ldots,X_{dn_{s}}=x_{dn_{d}}\right)\mu_{J,n}(\textup{d}m),
\]

where $P_{m}$ is the product measure of Rademacher distributions
with parameters $m_{\lambda}$ for all spins belonging to group $\lambda$.

The de Finetti measure $\mu_{J,n}$ is defined by the density function

\[
f_{J,n}(m)\propto\exp\left(-n\left(\frac{1}{2}m^{T}J^{-1}m-\sum_{\lambda}\frac{n_{\lambda}}{n}\ln\cosh m_{\lambda}\right)\right),m\in\mathbb{R}^{d}.
\]

In the high temperature regime, $\mu_{J,n}$ has an asymptotic concentration
property, such that for all $\delta>0$ there is a $D>0$ with the
property

\[
\mu_{J,n}\left(\mathbb{R}^{d}\backslash\left[-\delta,\delta\right]^{d}\right)<\exp\left(-Dn\right)
\]

for large enough $n$.
\end{prop}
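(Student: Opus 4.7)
The plan is twofold: first, derive the de Finetti representation via a Hubbard--Stratonovich Gaussian linearisation; second, obtain the exponential concentration via Laplace's method, exploiting the elementary inequality $\ln\cosh t \leq t^{2}/2$ to promote a local quadratic estimate at the origin into a global one.

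For the representation, I would rewrite $-\mathbb{H}(x) = \tfrac{1}{2n}S^{T}JS$ with $S=(S_{1},\ldots,S_{d})^{T}$ and $S_{\lambda}=\sum_{i}x_{\lambda i}$, and apply the Gaussian identity (valid because $J$ is positive definite)
\[
\exp\!\bigl(\tfrac{1}{2n}S^{T}JS\bigr)=c_{n}\int_{\mathbb{R}^{d}}\exp\!\bigl(-\tfrac{n}{2}m^{T}J^{-1}m+m^{T}S\bigr)\,\textup{d}m,
\]
where $c_{n}$ depends only on $n, d, J$. The linear term $m^{T}S$ factorises over individual spins, and the identity $e^{mx}=2\cosh(m)\,P_{m}(x)$ for $x\in\{\pm1\}$ splits the integrand into $\prod_{\lambda}(2\cosh m_{\lambda})^{n_{\lambda}}$ times the product Rademacher probability $P_{m}(X=x)$. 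Absorbing all $x$-independent prefactors into the normalisation of $\mu_{J,n}$ produces the stated density $f_{J,n}$. The homogeneous case, where $J$ is rank one, reduces to the scalar version of the same argument applied to $(\sum_{\lambda}S_{\lambda})^{2}$.

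For the concentration, set $F_{n}(m):=\tfrac{1}{2}m^{T}J^{-1}m-\sum_{\lambda}\tfrac{n_{\lambda}}{n}\ln\cosh m_{\lambda}$, so that $f_{J,n}(m)\propto e^{-nF_{n}(m)}$ and $F_{n}(0)=0$. The decisive input is the pointwise bound $\ln\cosh t\leq t^{2}/2$ (immediate from the convexity of $t\mapsto t^{2}/2-\ln\cosh t$, which vanishes with its derivative at $0$), giving the global estimate
\[
F_{n}(m)\geq\tfrac{1}{2}\,m^{T}H_{n}\,m,\qquad H_{n}:=J^{-1}-\boldsymbol{\alpha}_{n},\quad\boldsymbol{\alpha}_{n}:=\operatorname{diag}(n_{1}/n,\ldots,n_{d}/n).
\]
Since $H_{n}\to H$ and $H$ is positive definite in the high temperature regime, $\lambda_{\min}(H_{n})\geq c>0$ for all sufficiently large $n$, whence $F_{n}(m)\geq \tfrac{c}{2}|m|^{2}$ uniformly. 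The rescaling $u=\sqrt{n}\,m$ then bounds the numerator of the ratio $\mu_{J,n}(\mathbb{R}^{d}\setminus[-\delta,\delta]^{d})$ by a Gaussian tail of order $n^{-d/2}e^{-cn\delta^{2}/2}$, while restricting to $|m|\leq n^{-1/2}$ (on which $nF_{n}(m)=O(1)$) gives a denominator of order at least $n^{-d/2}$. The quotient yields $\mu_{J,n}(\mathbb{R}^{d}\setminus[-\delta,\delta]^{d})\leq e^{-Dn}$ for some $D=D(\delta)>0$ and large $n$.

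The main obstacle is the passage from the local to the global picture for $F_{n}$: a priori one could fear competing minima away from the origin that would destroy concentration (and such minima do in fact appear in the low-temperature regime). The inequality $\ln\cosh t\leq t^{2}/2$ is precisely the device that converts the defining high-temperature condition, positive definiteness of $H$, into uniform convex control over all of $\mathbb{R}^{d}$; once that lower bound is secured, what remains is standard Gaussian tail estimation and Laplace asymptotics.
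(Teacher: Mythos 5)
Your proof is correct, but there is nothing in the paper to compare it against: the paper never proves Proposition \ref{prop:asymptotic_conc}, it imports the whole statement from \cite{KT3}, a reference listed as ``in preparation''. Your argument is thus a self-contained replacement for that citation, and it follows what is indeed the standard route for results of this type (Hubbard--Stratonovich linearisation plus Laplace-type concentration). Both halves check out. For the representation: the Gaussian identity holds with $c_{n}=(2\pi)^{-d/2}\det(nJ^{-1})^{1/2}$, the spin-wise identity $e^{m_{\lambda}x_{\lambda i}}=2\cosh(m_{\lambda})P_{m_{\lambda}}(x_{\lambda i})$ gives $e^{m^{T}S}=\prod_{\lambda}(2\cosh m_{\lambda})^{n_{\lambda}}P_{m}(X=x)$, and summing over all configurations identifies $Z=c_{n}2^{n}\int e^{-nF_{n}(m)}\textup{d}m$, which yields exactly $f_{J,n}\propto e^{-nF_{n}}$. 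For the concentration: $g(t)=t^{2}/2-\ln\cosh t$ has $g''=\tanh^{2}\geq0$ and $g(0)=g'(0)=0$, so $F_{n}(m)\geq\tfrac{1}{2}m^{T}H_{n}m$; since $H_{n}\to H$ and $H$ is positive definite, $\lambda_{\min}(H_{n})\geq c>0$ for all large $n$, and the complementary bound $F_{n}(m)\leq\tfrac{1}{2}m^{T}J^{-1}m$ (from $\ln\cosh\geq0$) makes your denominator of order at least $n^{-d/2}$ over $\{|m|\leq n^{-1/2}\}$, so the quotient is bounded by $e^{-Dn}$ for some $D(\delta)>0$. The precise constant in your Gaussian-tail exponent ($c\delta^{2}/2$ versus $c\delta^{2}/4$ after splitting off the decay factor) is immaterial, since only existence of some $D>0$ is claimed. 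One point deserves emphasis: the proposition's density formula involving $J^{-1}$ only makes literal sense for invertible $J$, and in the homogeneous case with $d\geq2$ the matrix $J=(\beta)_{\lambda\mu}$ is singular of rank one; your closing remark resolves this correctly, since there the transform is one-dimensional, the de Finetti measure lives on $\mathbb{R}$ with density proportional to $\exp(-n(m^{2}/2\beta-\ln\cosh m))$, and the same convexity trick with $\beta<1$ gives $m^{2}/2\beta-\ln\cosh m\geq\tfrac{1-\beta}{2\beta}m^{2}$, so the concentration argument goes through verbatim.
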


\begin{rem}
\label{rem:d=00003D1}If we set $d=1$, then the de Finetti density
above becomes proportional to

\[
\exp\left(-n\left(\frac{1}{2\beta}m^{2}-\ln\cosh m\right)\right),m\in\mathbb{R}.
\]

From this expression, we obtain the usual de Finetti density defined
on $\left[-1,1\right]$ proportional to

\[
\frac{\exp\left(-\frac{n}{2}\left(\frac{1}{\beta}\left(\frac{1}{2}\ln\frac{1+t}{1-t}\right)+\ln\left(1-t^{2}\right)\right)\right)}{1-t^{2}},t\in\left[-1,1\right]
\]

by the substitution $t:=\tanh m$. Cf. \cite{MM}.
\end{rem}

Let $\varphi_{\mathcal{R}(m)}$ be the characteristic function of
a centred Rademacher distribution with parameter $m$, i.e. the distribution
of $X-\tanh m$ for a random variable $X\sim\mathcal{R}_{m}$, and
let $\textup{E}_{m}$ be the expectation under that distribution.

Now we deal with expression (\ref{eq:centre}). We pick some $0<\delta<\pi/2$
and partition the set $\prod_{\lambda}\left[-\frac{\pi\sqrt{n_{\lambda}}}{2},\frac{\pi\sqrt{n_{\lambda}}}{2}\right]=\prod_{\lambda}\left[-\delta\sqrt{n_{\lambda}},\delta\sqrt{n_{\lambda}}\right]\cup B_{n}=:A_{n}\cup B_{n}$
for each $n\in\mathbb{N}$.

Our goal is to show that (\ref{eq:centre}) converges to 0. We do
so by showing the result separately over $A_{n}$ and $B_{n}$.

The following upper bound holds over $A_{n}$:

\begin{equation}
I_{A_{n}}(t)\left|\varphi_{\boldsymbol{S}^{n}}(t)\right|\leq I_{A_{n}}(t)\int_{\mathbb{R}^{d}}\prod_{\lambda}\left|\varphi_{\mathcal{R}\left(m_{\lambda}\right)}\left(\frac{t}{\sqrt{n_{\lambda}}}\right)\right|^{n_{\lambda}}\mu_{J,n}(\textup{d}m).\label{eq:upper_bound_int}
\end{equation}

We calculate an upper bound for the Rademacher characteristic function:

\begin{align*}
\left|\varphi_{\mathcal{R}\left(m\right)}(u)\right| & =\left|\textup{E}_{m}\exp\left(iu\left(X_{\lambda1}-\bar{m}\right)\right)\right|\\
 & \leq\left|1-\left(1-\bar{m}^{2}\right)\frac{u^{2}}{2}\right|+u^{2}\left(1-\bar{m}^{2}\right)\min\left\{ \left|u\right|\left(1+\bar{m}^{2}\right),1\right\} \\
 & \leq1-\left(1-\bar{m}^{2}\right)\frac{u^{2}}{2}+\left(1-\bar{m}^{2}\right)\frac{u^{2}}{4}\\
 & \leq\exp\left(-\left(1-\bar{m}^{2}\right)\frac{u^{2}}{4}\right).
\end{align*}

The first inequality follows from a Taylor expansion of the exponential
function with the remainder term of order three $u^{2}\textup{E}_{m}\min\left\{ \left|u\right|\left|X_{\lambda1}-\bar{m}\right|^{3},\left|X_{\lambda1}-\bar{m}\right|^{2}\right\} $,
which is smaller or equal $u^{2}\left(1-\bar{m}^{2}\right)\min\left\{ \left|u\right|\left(1+\bar{m}^{2}\right),1\right\} $
as can be verified by direct calculation. The second inequality holds
for small enough $\left|u\right|$. The third inequality for any $u\in\mathbb{R}$
is well-known. Therefore,

\[
\left|\varphi_{\mathcal{R}\left(m_{\lambda}\right)}\left(\frac{t}{\sqrt{n_{\lambda}}}\right)\right|\leq\exp\left(-\left(1-\bar{m}_{\lambda}^{2}\right)\frac{t_{\lambda}^{2}}{4n_{\lambda}}\right),
\]

and we pick some $\tau\in\left(0,1\right)$ to continue with our calculation:

\begin{align}
\eqref{eq:upper_bound_int} & \leq I_{A_{n}}(t)\int_{\mathbb{R}^{d}}\exp\left(-\frac{1}{4}\sum_{\lambda}\left(1-\bar{m}_{\lambda}^{2}\right)t_{\lambda}^{2}\right)\mu_{J,n}(\textup{d}m)\nonumber \\
 & \leq\int_{\left[-\tau,\tau\right]^{d}}\exp\left(-\frac{1}{4}\sum_{\lambda}\left(1-\bar{m}_{\lambda}^{2}\right)t_{\lambda}^{2}\right)\mu_{J,n}(\textup{d}m)\nonumber \\
 & \quad+I_{A_{n}}(t)\mu_{J,n}\left(\mathbb{R}^{d}\backslash\left[-\tau,\tau\right]^{d}\right)\nonumber \\
 & \leq\exp\left(-\frac{1}{4}\left(1-\tanh^{2}\tau\right)\sum_{\lambda}t_{\lambda}^{2}\right)+I_{A_{n}}(t)\exp\left(-\eta n\right),\label{eq:upper_bound_int_2}
\end{align}

where the second term in the last line follows from Lemma \ref{prop:asymptotic_conc}.
Note that $\eta>0$.

It is clear that the first summand in (\ref{eq:upper_bound_int_2})
is integrable. For the second summand, we have

\begin{align*}
I_{A_{n}}(t)\exp\left(-\eta n\right) & \leq I_{A_{1}}\left(t\right)\exp\left(-\eta\right)+\sum_{k=1}^{\infty}I_{A_{k+1}\backslash A_{k}}\left(t\right)\exp\left(-\eta\left(k+1\right)\right)=:f(t).
\end{align*}

Let $\boldsymbol{\lambda}^{d}$ be the Lebesgue measure on $\mathbb{R}^{d}$.
We show that the function $f$ on the right hand side is an integrable
majorant for all $I_{A_{n}}(t)\exp\left(-\eta n\right),n\in\mathbb{N}$:

\begin{align*}
\int_{\mathbb{R}^{d}}f(t)\textup{d}t & =\boldsymbol{\lambda}^{d}\left(A_{1}\right)\exp\left(-\eta\right)+\sum_{k=1}^{\infty}\boldsymbol{\lambda}^{d}\left(A_{k+1}\backslash A_{k}\right)\exp\left(-\eta\left(k+1\right)\right).
\end{align*}

Each summand in the series above can be bounded above by

\begin{align*}
\boldsymbol{\lambda}^{d}\left(A_{k+1}\backslash A_{k}\right)\exp\left(-\eta\left(k+1\right)\right) & \leq\boldsymbol{\lambda}^{d}\left(A_{k+1}\right)\exp\left(-\eta\left(k+1\right)\right)\\
 & =\boldsymbol{\lambda}^{d}\left(\prod_{\lambda}\left[-\delta\sqrt{\left(k+1\right)_{\lambda}},\delta\sqrt{\left(k+1\right)_{\lambda}}\right]\right)\\
 & \leq\left(2\delta\right)^{d}\left(k+1\right)^{\frac{d}{2}}\exp\left(-\eta\left(k+1\right)\right),
\end{align*}

which is summable in $k$.

As $\left|\varphi_{\mathcal{N}(C)}(t)\right|$ is integrable as well,
we have found that $I_{A_{n}}(t)\left|\varphi_{\boldsymbol{S}^{n}}(t)-\varphi_{\mathcal{N}(C)}(t)\right|$
has an integrable majorant. From the global central limit theorem
\ref{thm:CLT}, we know that $\left|\varphi_{\boldsymbol{S}^{n}}(t)-\varphi_{\mathcal{N}(C)}(t)\right|\rightarrow0$
pointwise as $n\rightarrow\infty$, so we conclude that the integral
of $I_{A_{n}}(t)\left|\varphi_{\boldsymbol{S}^{n}}(t)-\varphi_{\mathcal{N}(C)}(t)\right|$
over $\mathbb{R}^{d}$ converges to 0 as $n\rightarrow\infty$.

We proceed with the integrand over the set $B_{n}$:

\begin{align}
I_{B_{n}}(t)\left|\varphi_{\boldsymbol{S}^{n}}(t)\right| & \leq I_{B_{n}}(t)\int_{\mathbb{R}^{d}}\prod_{\lambda}\left|\varphi_{\mathcal{R}\left(m_{\lambda}\right)}\left(\frac{t}{\sqrt{n_{\lambda}}}\right)\right|^{n_{\lambda}}\mu_{J,n}(\textup{d}m)\nonumber \\
 & \leq I_{B_{n}}(t)\int_{\mathbb{R}^{d}}\left(\theta(m)\right)^{n}\mu_{J,n}(\textup{d}m),\label{eq:upper_bound_int_3}
\end{align}

where the existence of

\[
\theta(m)=\max_{t\in B_{n},\lambda=1,\ldots,d}\left|\varphi_{\mathcal{R}\left(m_{\lambda}\right)}\left(t_{\lambda}\right)\right|<1
\]

is a consequence of Lemma \ref{lem:char_fn}. We continue with the
calculation of an upper bound:

\[
\eqref{eq:upper_bound_int_3}\leq I_{B_{n}}(t)\int_{\left[-\tau,\tau\right]^{d}}\left(\theta(m)\right)^{n}\mu_{J,n}(\textup{d}m)+I_{B_{n}}(t)\mu_{J,n}\left(\mathbb{R}^{d}\backslash\left[-\tau,\tau\right]^{d}\right).
\]

On the interval $\left[-\tau,\tau\right]$, $\theta$ is bounded away
from 1:

\[
s:=\sup_{m\in\left[-\tau,\tau\right]}\theta(m)<1.
\]

With this final upper bound for $I_{B_{n}}(t)\left|\varphi_{\boldsymbol{S}^{n}}(t)\right|$,
we see that

\begin{align*}
\int_{\mathbb{R}^{d}}I_{B_{n}}(t)\left|\varphi_{\boldsymbol{S}^{n}}(t)\right|\textup{d}t & \leq\int_{\mathbb{R}^{d}}I_{B_{n}}(t)\left(s^{n}+\mu_{J,n}\left(\mathbb{R}^{d}\backslash\left[-\tau,\tau\right]^{d}\right)\right)\textup{d}t\\
 & \leq2^{d}\left(\frac{\pi}{2}-\delta\right)^{d}\prod_{\lambda}\sqrt{n_{\lambda}}\left(s^{n}+\mu_{J,n}\left(\mathbb{R}^{d}\backslash\left[-\tau,\tau\right]^{d}\right)\right)\\
 & \rightarrow0
\end{align*}

as $n\rightarrow\infty$, due to the exponential decay of the terms
$s^{n}$ and $\mu_{J,n}\left(\mathbb{R}^{d}\backslash\left[-\tau,\tau\right]^{d}\right)$.

Contact information of the authors:

michael.fleermann@fernuni-hagen.de

werner.kirsch@fernuni-hagen.de

gabor.toth@fernuni-hagen.de

\begin{thebibliography}{10}
\bibitem{Collet}Collet, Francesca: Macroscopic Limit of a Bipartite
Curie-Weiss Model: A Dynamical Approach, J. Stat. Phys., 157(6), pp.
1301-1319 (2014)

\bibitem{CG} Contucci, Pierluigi, Gallo, Ignacio: Bipartite Mean
Field Spin Systems. Existence and Solution, Math. Phys. Elec. Jou.
Vol 14, N.1, 1-22 (2008)

\bibitem{CGh}Contucci, Pierluigi and Ghirlanda, S.: Modelling Society
with Statistical Mechanics: an Application to Cultural Contact and
Immigration. Quality and Quantity, 41, 569-578 (2007)

\bibitem{Durrett}Durrett, Rick: Probability Theory and Examples,
Fifth Edition, Cambridge University Press (2019)

\bibitem{Ellis} Ellis, Richard: Entropy, large deviations, and statistical
mechanics, Whiley 1985

\bibitem{FM} Fedele, Micaela: Rescaled Magnetization for Critical
Bipartite Mean-Fields Models, J. Stat. Phys. 155:223--226 (2014)

\bibitem{FC} Fedele, Micaela; Contucci, Pierluigi: Scaling Limits
for Multi-species Statistical Mechanics Mean-Field Models, J. Stat.
Phys. 144:1186--1205 (2011)

\bibitem{Fleermann}Fleermann, Michael: Global and Local Semicircle
Law for Random Matrices with Correlated Entries. PhD thesis. FernUniversität
in Hagen, Germany, 2019.

\bibitem{FH}Fleermann, Michael and Heiny, Johannes: High-dimensional
sample covariance matrices with Curie-Weiss entries. Oct. 2019. URL:
https: https://arxiv.org/pdf/1910.12332.pdf.

\bibitem{FKK}Fleermann, Michael; Kirsch, Werner; Kriecherbauer, Thomas.
The almost sure semicircle law for random band matrices with dependent
entries. Stochastic Processes and their Applications 131 (2021), pp.
172-200.

\bibitem{FriesenLoewe}Friesen, Olga and Löwe, Matthias: A phase transition
for the limiting spectral density of random matrices. Electronic Journal
of Probability 18.17 (2013), pp. 1-17.

\bibitem{GBC}Gallo, Ignacio; Barra, Adriano; Contucci, Pierluigi;
Parameter Evaluation of a Simple Mean-Field Model of Social Interaction,
Math. Models Methods Appl. Sci., 19 (suppl.), pp. 1427-1439 (2009)

\bibitem{HKW}Hochstättler, Winfried; Kirsch, Werner; Warzel, Simone:
Semicircle law for a matrix ensemble with dependent entries. J. Theoret.
Probab. 29.3 (2016), pp. 1047-1068.

\bibitem{Husimi}Husimi, K.: Statistical Mechanics of Condensation,
Proceedings of the International Conference of Theoretical Physics,
pp. 531-533, Science Council of Japan, Tokyo (1953)

\bibitem{Kac}Kac, M.: Mathematical Mechanisms of Phase Transitions,
in Statistical Physics: Phase Transitions and Superfluidity, Vol.
1, pp. 241-305, Brandeis University Summer Institute in Theoretical
Physics (1968)

\bibitem{MM}Kirsch, Werner: A Survey on the Method of Moments, available
from http://www.fernuni-hagen.de/stochastik/

\bibitem{Penrose}Kirsch, Werner: On Penrose's Square-root Law and
Beyond, Homo Oeconomicus 24(3/4): 357--380, 2007

\bibitem{KK}Kirsch, Werner and Kriecherbauer, Thomas: Semicircle
law for generalized Curie-Weiss matrix ensembles at subcritical temperature.
J. Theoret. Probab. 31.4 (2018), pp. 2446-2458.

\bibitem{KT3}Kirsch, Werner; Toth, Gabor: CWM article in preparation

\bibitem{KT1}Kirsch, Werner; Toth, Gabor: Two Groups in a Curie-Weiss
Model. Math Phys Anal Geom 23, 17 (2020)

\bibitem{KT2}Kirsch, Werner; Toth, Gabor: Two Groups in a Curie--Weiss
Model with Heterogeneous Coupling. J Theor Probab 33, 2001--2026
(2020)

\bibitem{KT_CWM}Kirsch, Werner; Toth, Gabor: CWM optimal weights
article

\bibitem{LS} Löwe, Matthias; Schubert, Kristina: Fluctuations for
block spin Ising models. Electron. Commun. Probab. 23 (2018) 

\bibitem{Temperley}Temperley, H.N.V.: The Mayer Theory of Condensation
Tested against a Simple Model of the Imperfect Gas, Proc. Phys. Soc.,
A 67, pp. 233-238 (1954)

\bibitem{Thompson}Thompson, C.J.: Mathematical Statistical Mechanics,
Macmillan (1972)

\bibitem{Toth}Toth, Gabor: Correlated Voting in Multipopulation Models,
Two-Tier Voting Systems, and the Democracy Deficit, PhD Thesis, Fernuniversität
in Hagen. (2020) https://ub-deposit.fernuni-hagen.de/receive/mir\_mods\_00001617
\end{thebibliography}
\end{document}